\newtheorem{theorem}{Theorem}[section]
\newtheorem{proposition}[theorem]{Proposition}
\newtheorem{lemma}[theorem]{Lemma}
\newtheorem{problem}[theorem]{Problem}
\theoremstyle{definition}
\newtheorem{example}[theorem]{Example}
\newcommand{\FCR}{FCR}
\newcommand{\K}{\mathcal K}
\newcommand{\T}{\mathcal T}
\newcommand{\defeq}{\mathrel{:\mkern-0.25mu=}}
\newcommand{\sy}[1]{{\sf S}_{#1}}
\newcommand{\sym}[1]{{\sf Sym}\,#1}
\renewcommand{\wr}{\,{\sf wr}\,}
\newcommand{\aut}[1]{{\sf Aut}\,{#1}}
\newcommand{\psl}[2]{\mbox{\sf PSL}_{#1}(#2)}
\newcommand{\F}{\mathbb F}
\renewcommand{\leq}{\leqslant}
\renewcommand{\geq}{\geqslant}
\newcommand{\supp}{{\rm Supp}\,}
\font\tenvr=cmmi10 scaled 1600
\renewcommand{\wr}
    {\mathrel{\mkern-1mu\mathchar"0F7B\mkern-1mu}}
\begin{document}

\title[Uniform automorphisms and groups of simple diagonal type]
      {Group factorisations, uniform automorphisms,
        and permutation groups of simple diagonal type}
\author{Cheryl E. Praeger}
\address[C. E. Praeger]{School of Mathematics and Statistics\\
The University of Western Australia\\
35 Stirling Highway 6009 Crawley\\
Western Australia\newline
 cheryl.praeger@uwa.edu.au\\
www.maths.uwa.edu.au/$\sim$praeger}
\author{Csaba Schneider}
\address[C. Schneider]{Departamento de Matem\'atica\\
Instituto de Ci\^encias Exatas\\
Universidade Federal de Minas Gerais\\
Av.\ Ant\^onio Carlos 6627\\
Belo Horizonte, MG, Brazil\\
csaba@mat.ufmg.br\\
 www.mat.ufmg.br/$\sim$csaba}

\begin{abstract}
  We present a new proof, which is independent of the finite simple group
  classification and applies also to infinite groups,
  that quasiprimitive permutation groups of simple diagonal type cannot be
  embedded into wreath products in product action.
  The proof uses several deep results that concern factorisations of direct
  products involving subdirect subgroups. We find that such factorisations are
  controlled by the existence of uniform automorphisms.
\end{abstract}
\date{\today}
\subjclass[2010]{20B05,20B35,20E22}

\thanks{We are grateful to Robert Guralnick and Kay Magaard for
  their help with the  construction
  in Example~\ref{ex:uniaut}. We thank Pavel Shumyatsky for his advice on the
  literature on uniform automorphisms. The first author wishes to acknowledge the support of the Australian Research Council Discovery Grant (DP160102323).
  The  second author was supported by the
 research projects 302660/2013-5 {\em (CNPq, Produtividade em Pesquisa)},
 475399/2013-7 {\em (CNPq, Universal)}, and 
 the APQ-00452-13 {\em (Fapemig, Universal)}.}
\maketitle

\section{Introduction}\label{section:intro}

The O'Nan--Scott Theorem for finite primitive and quasiprimitive permutation
groups identifies several classes of such groups and claims that
each  primitive or quasiprimitive group is a member of a unique class.
In several combinatorial and
group theoretic applications, it is necessary to understand
the possible inclusions among primitive and quasiprimitive permutation groups.
Such inclusions were studied by the first author for finite
primitive permutation
groups in~\cite{prae:incl}, while inclusions of finite quasiprimitive groups
in primitive groups were described by  R.~W.~Baddeley and the first author
in~\cite{bad:quasi}. The possible inclusions are described in both cases by
considering separately each of the O'Nan--Scott types of these
permutation groups.
Some of these results rely on the finite simple group classification (FSGC).
In this paper we show how to remove the assumption of finiteness  and hence the
dependence on the FSGC for embeddings of simple diagonal type groups into
wreath products in product action (see Section~\ref{section:sd} for the
definitions).

It was proved in~\cite{prae:incl}
that a finite primitive group of simple diagonal
type cannot be a subgroup of a wreath product in product action and
a similar result was proved in~\cite[Corollary~1.3]{transcs}
for finite quasiprimitive groups of simple
diagonal type. The latter theorem formed an important part of the description
of primitive overgroups of finite quasiprimitive permutation
groups in~\cite{bad:quasi}.
As with many results concerning the inclusion problem,
\cite[Corollary~1.3]{transcs}  depended 
on the FSGC. As mentioned above, we extend this
result to infinite permutation groups of simple diagonal type and
prove that such groups cannot be embedded into primitive groups of product
action type.

\begin{theorem}\label{mainth}
  Suppose that $G$ is a quasiprimitive permutation group of simple diagonal
  type acting on a possibly infinite set $\Omega$. Then $G$ is not contained
  in a subgroup of $\sym\Omega$ which is permutationally isomorphic to
  $\sym\Gamma\wr\sy\ell$ with $|\Gamma|\geq 2$ and $\ell\geq 2$
  acting in its product action.
\end{theorem}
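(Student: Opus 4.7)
Assume for contradiction that $G$ embeds in $W := \sym\Gamma \wr \sy\ell$ acting on $\Omega$ in product action, with $|\Gamma|,\ell \geq 2$. Such an embedding is equivalent to the existence of a $G$-invariant cartesian decomposition of $\Omega$ with $\ell$ factors. After fixing a point $\omega \in \Omega$ and letting $K_i \leq G$ be the stabiliser of the part of the $i$-th factor containing $\omega$, the defining conditions of the decomposition become $\bigcap_{i=1}^\ell K_i = G_\omega$ together with the factorisations $G = K_i \cdot \bigcap_{j\ne i} K_j$ for each $i$. Since $G$ is quasiprimitive of simple diagonal type, its socle $N$ is isomorphic to $T^k$ for some non-abelian simple group $T$ and some $k \geq 2$, $N$ is transitive on $\Omega$, and the point stabiliser $D := N_\omega$ is a full diagonal subgroup of $N$ isomorphic to $T$.

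The first step is to descend the cartesian system $(K_i)$ to the socle. Setting $M_i := N \cap K_i \geq D$, quasiprimitivity together with the transitivity of $N$ on each of the $\ell$ factor partitions implies that the $M_i$ satisfy the analogous internal identities $\bigcap_i M_i = D$ and $N = M_i \cdot \bigcap_{j \ne i} M_j$ for each $i$. Hence it suffices to show that the pair $(N,D)$ admits no such cartesian system of $\ell \geq 2$ proper subgroups, each containing $D$.

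To rule this out I would apply a Goursat-type analysis. Because $T$ is simple, every subgroup of $T^k$ containing the full diagonal $D$ corresponds, via Goursat's lemma, to an equivalence relation on the coordinate set $\{1,\ldots,k\}$ and has the structure of a direct product of full diagonal subgroups of the corresponding blocks. Imposing the factorisation $N = M_i \cdot \bigcap_{j\ne i} M_j$ on subgroups of this shape reduces, after matching the block partitions, to a factorisation of $T \times T$ by the diagonal $\{(t,t):t \in T\}$ and a proper subdirect subgroup $A$. By Goursat's lemma again, $A$ must be the graph of some $\phi \in \aut T$, and a short calculation shows that the required factorisation holds if and only if the map $t \mapsto t^{-1}\phi(t)$ is surjective on $T$ — equivalently, $\phi$ is a \emph{uniform} automorphism of $T$.

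The main obstacle, and the reason the finite simple group classification can be avoided, is the final step: showing that no non-abelian simple group admits a uniform automorphism. In the finite case uniformity is equivalent to fixed-point-freeness and the non-existence of such automorphisms of non-soluble groups follows from classical arguments; for the general, possibly infinite, case one invokes the deeper results from the literature on uniform automorphisms acknowledged in the introduction. This step, rather than the combinatorial descent, is where the bulk of the work lies, and once it is in place the chain of reductions above produces the required contradiction.
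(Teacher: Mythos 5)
Your reduction strategy---passing to the socle, viewing the embedding through an abstract cartesian factorisation of $N$ whose members contain the diagonal stabiliser $D$, and then analysing subgroups of $T^k$ above a full diagonal via a Goursat-style picture---is broadly in the same spirit as the paper's approach (which uses a $G_\omega$-invariant abstract cartesian factorisation $\K=\{K_1,\ldots,K_\ell\}$ of $M$ with $\bigcap K_i = M_\omega$). However, the final and, as you rightly say, crucial step of your argument is not just unproved but false. You propose to finish by showing that no non-abelian simple group admits a uniform automorphism. That statement fails for infinite simple groups: the paper's Example~\ref{ex:uniaut} exhibits a uniform automorphism of $\psl n\F$ (with $\F=\overline{\F_p}$), obtained from the Frobenius map via Lang's Theorem, and similar examples exist for other simple groups of Lie type over $\overline{\F_p}$. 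So no ``deeper result from the literature'' can supply the claim you need; the proposed route is blocked.

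The paper escapes this obstruction by exploiting a hypothesis that your sketch does not use at the critical moment: that $M_\omega$ is a \emph{subdirect} subgroup of $M$. Proposition~\ref{mainstripfact} shows that if two non-disjoint full strips are involved in $\K$, then \emph{both} of the following hold: $T_1$ admits a uniform automorphism (which, as just noted, is not by itself a contradiction), \emph{and} $M_0=\bigcap K_i$ fails to be subdirect in $M$. It is the second conclusion---not the non-existence of uniform automorphisms---that contradicts the diagonal-type hypothesis, and this is precisely what makes the argument FSGC-free and valid for infinite $T$. Concretely: even when the strip analysis forces a factorisation such as $(Y_1\times\cdots\times Y_d)(Z_1\times\cdots\times Z_d)=T_1\times\cdots\times T_{2d}$ that produces a uniform automorphism $\alpha=\alpha_1\beta_1\cdots\alpha_d\beta_d$ of $T_1$, the subdirectness of $M_0\sigma$ then forces $\alpha=1$, and the identity is never uniform. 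You would also need the full cycle version (Lemma~\ref{doublestrips}) rather than only the $T\times T$ case: the cartesian factors can involve non-disjoint strips whose overlap pattern forms a cycle of length $2d>2$, and the elementary $T\times T$ computation does not cover that. Once these two points---the subdirectness argument in place of a false non-existence claim, and the cycle-length-$2d$ analysis---are incorporated, the combinatorial descent you outline does indeed lead to the conclusion that all strips involved in $\K$ are pairwise disjoint, hence $M_\omega$ decomposes as a direct product of at least two strips, which is the compound (not simple) diagonal case.
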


As is typical in the study of the inclusion problem,
the key result underpinning the proof of Theorem~\ref{mainth} in the finite case is a factorisation result~\cite[Lemma~2.2]{bad:quasi}
concerning factorisations
of finite characteristically simple groups using subdirect subgroups as factors.
The proof of~\cite[Lemma~2.2]{bad:quasi} depends on the fact that a finite simple group does
not admit uniform automorphisms, which, in turn, is one of the well-known
consequences of the FSGC. We noticed that this
result can be extended to arbitrary groups that do not admit a uniform
automorphism; see Section~\ref{set:factstrip} for the definition of a uniform automorphism and of a strip.

\begin{theorem}\label{nostripfact}
  Suppose that $T$ is a group that does not admit a uniform
  automorphism and that $X,\ Y$ are direct products of pairwise
  disjoint  non-trivial full
  strips in $T^k$ with $k\geq 2$. Then $XY\neq T^k$.
\end{theorem}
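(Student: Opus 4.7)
The plan is to argue by contradiction: suppose $XY = T^k$, writing $X = X_1 \times \cdots \times X_r$ and $Y = Y_1 \times \cdots \times Y_s$ with full strips $X_a, Y_b$ on (pairwise disjoint) supports $I_a, J_b$. I would first dispose of the natural base case $k=2$ with $X = D_\alpha := \{(t, t^\alpha) \mid t \in T\}$ and $Y = D_\beta$ for $\alpha, \beta \in \aut{T}$. Solving $(a, b) = (t, t^\alpha)(s, s^\beta)$ for $t, s$ reduces $D_\alpha D_\beta = T \times T$ to the surjectivity of the map $\sigma_\gamma\colon T \to T$, $\sigma_\gamma(t) := t^\gamma t^{-1}$, where $\gamma = \alpha\beta^{-1}$. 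But this surjectivity is precisely the defining property of $\gamma$ being a uniform automorphism of $T$, while $\sigma_{\id}$ is never surjective for $T \neq 1$, so the base case is handled directly by the hypothesis.

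The natural first reduction is to seek distinct coordinates $i, j$ lying jointly in the support of some $X_a$ and simultaneously in the support of some $Y_b$ (equivalently, $|I_a \cap J_b| \geq 2$). When such a pair exists, projecting $XY = T^k$ onto coordinates $\{i, j\}$ gives $\pi_{ij}(X) = D_\alpha$ and $\pi_{ij}(Y) = D_\beta$, and $D_\alpha D_\beta = T \times T$ contradicts the base case.

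The main obstacle is the complementary case where $|I_a \cap J_b| \leq 1$ for all $(a, b)$: this occurs genuinely, e.g.\ $k = 4$, $I_1 = \{1, 2\}, I_2 = \{3, 4\}, J_1 = \{1, 3\}, J_2 = \{2, 4\}$. I would handle it via a holonomy argument: form the bipartite multigraph whose vertices are the strips $X_a, Y_b$ and whose edges record common coordinates, and walk along a closed cycle $X_{a_1} \to Y_{b_1} \to \cdots \to X_{a_1}$, composing the strip isomorphisms and their inverses to obtain a holonomy automorphism $\gamma \in \aut{T}$. Projecting $XY = T^k$ onto the coordinates visited by the cycle and repeating the base-case calculation shows that $XY = T^k$ forces $\sigma_\gamma$ to be surjective; hence either $\gamma \neq \id$ is a uniform automorphism, contradicting the hypothesis, or $\gamma = \id$, in which case the residual equation becomes a nontrivial relation among the target coordinates and the projected product is not fully covered, again a contradiction.

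The delicate point is to guarantee the existence of such a closed walk under the standing assumption $XY = T^k$. In the finite case, $|X| \cdot |Y| = |T|^{r+s} \geq |T|^k$ combined with $|I_a|, |J_b| \geq 2$ forces $r + s = k$ and every strip to have support of size exactly $2$, so the union of the two induced matchings on $\{1, \ldots, k\}$ is a disjoint union of even cycles, which always contains the desired closed walk; the general (possibly infinite) case requires a rank-theoretic replacement for this counting, exploiting that $X \cap Y$ must be trivial whenever $XY = T^k$ and $X, Y$ have the strip structure above.
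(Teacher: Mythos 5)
Your outline of the cycle case is essentially the paper's: the $k=2$ reduction is Lemma~\ref{orthstrip}, the edge-multiplicity reduction ($|I_a\cap J_b|\le 1$) is Claim~2, and the ``holonomy'' argument around a cycle of strips is precisely Lemma~\ref{doublestrips} and Claim~3 of the paper's proof. So far so good, and the ideas are the right ones.

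The genuine gap is in the last paragraph, where you try to guarantee that the bipartite strip graph contains a cycle. The cardinality argument $|X|\cdot|Y|\geq |T|^k$ forcing $r+s=k$ and all supports of size two is correct for finite $T$, but the theorem is stated for arbitrary $T$, and for infinite $T$ this comparison is vacuous: $|T|^{r}\cdot|T|^{s}\geq|T|^{k}$ holds identically. Your proposed rescue, that ``$X\cap Y$ must be trivial whenever $XY=T^k$,'' is also false in the infinite setting: already for $k=2$ with $X=\{(t,t)\}$ and $Y=\{(t,t\alpha)\}$, the intersection is $\{(t,t):t\alpha=t\}$, and a uniform automorphism of an infinite group need not be fixed-point-free (surjectivity of $t\mapsto t^{-1}(t\alpha)$ does not imply injectivity when $T$ is infinite; see the discussion preceding Lemma~\ref{fsgorth}). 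So there is no rank/cardinality substitute here, and in fact a cycle need not exist. What you are missing is the acyclic case, which the paper treats head-on: once Claim~3 rules out cycles, the graph is a forest, and Claims~4--6 show (by adding auxiliary strips to cover the uncovered coordinates and appealing again to the two-leaves property) that the forest must be a single path $X_1-Y_1-\cdots-X_r-Y_r$ with exactly one uncovered coordinate on each side. The final contradiction is then a direct, finiteness-free coordinate chase: comparing entries of $(x,1,\dots,1)=ts$ from the last coordinate backwards forces $t_i=s_i=1$ for all $i$, hence $x=1$. You need an argument of this kind to close the acyclic case for infinite $T$; as written, your proof only establishes the theorem when $T$ is finite.
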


The proof of Theorem~\ref{mainth} relies on the technical
Proposition~\ref{mainstripfact}, and, in turn, the proof
of Proposition~\ref{mainstripfact} uses Theorem~\ref{nostripfact}.

The O'Nan--Scott Theorem for finite primitive
groups was originally intended to describe the maximal
subgroups of finite symmetric groups; see~\cite{sco:rep,lps:maxsub}. Along the same lines,
some maximal subgroups of infinite symmetric groups have been associated
with structures such as subsets, partitions~\cite{braziletal},
and cartesian decompositions~\cite{covmpmek}. One class of maximal
subgroups of a finite symmetric group is formed by maximal subgroups of
simple diagonal type. It would be interesting to explore
if  infinite symmetric groups also have maximal subgroups that are associated
(via, for instance, filters or ideals, as in~\cite{braziletal,covmpmek})
to groups of simple diagonal type.
Theorem~\ref{mainth} suggests
that none of the maximal subgroups considered in~\cite{braziletal,covmpmek} contain a primitive
group of simple diagonal type. On the other hand,
simple diagonal type groups do lie in maximal subgroups of $\sym\Omega$,
at least when $\Omega$ is countable by~\cite[Theorem~1.1]{macpr}.

It is a curious fact that the results presented in this note have two
distinct
points of
connection with the work of our late friend and colleague, Laci Kov\'acs.
As early as the 1960's Laci studied uniform automorphisms
for solvable groups (see for instance~\cite{uniform}), and in the 1980's, his attention
turned towards the theory of primitive permutation groups. He devoted
an entire paper~\cite{kov:sd} to primitive groups of simple diagonal type, and it was
this paper in which the terminology `simple diagonal type' was first used.

\section{Uniform automorphisms and factorisations of direct products}
\label{set:factstrip}

In this section we prove several results on factorisations
of direct products using  diagonal subgroups.

An automorphism $\alpha$ of a group $G$ is 
{\em uniform}, 
if the map $\tilde\alpha:g\mapsto g^{-1}(g\alpha)$ is surjective.
If $G$ is a finite group, then a map $G\rightarrow G$ is surjective 
if and only if it is injective. Thus, 
if $G$ is 
a finite group, then $\alpha\in\aut G$ is not uniform
if and only if the map $g\mapsto g^{-1}(g\alpha)$ is not injective; 
that is, $g^{-1}(g\alpha) = h^{-1}(h\alpha)$ for some distinct $g,\ h\in G$. The last equation 
is equivalent to $hg^{-1}=(hg^{-1})\alpha$; that is, in this case,
the element
$hg^{-1}$ is a non-identity 
fixed-point of the automorphism $\alpha$. It is a consequence of
the FSGC
that every automorphism of a non-abelian
finite simple group has non-identity fixed 
points~\cite[Theorem~1.48]{gorenst}. In fact the  following stronger
result holds; see~\cite[9.5.3]{ks} for a proof.

\begin{lemma}\label{fsgorth}
  A  finite non-solvable  group has no uniform (that is, fixed-point-free)
  automorphisms.
\end{lemma}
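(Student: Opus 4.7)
The plan is a minimal-counterexample argument. Suppose for contradiction that the lemma fails, and let $(G,\alpha)$ be a pair with $G$ finite non-solvable, $\alpha\in\aut G$ uniform, and $|G|$ minimal. By the discussion preceding the statement, on a finite group being uniform is equivalent to being fixed-point-free, so my task is to produce a non-identity fixed point of $\alpha$ in $G$.

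The first step is to establish that fixed-point-freeness descends both to $\alpha$-invariant normal subgroups and to the corresponding quotients. Restriction of $\alpha$ to any $\alpha$-invariant subgroup clearly preserves fixed-point-freeness. For a quotient, let $N\lhd G$ be $\alpha$-invariant and let $\bar\alpha$ be the induced automorphism of $G/N$. A coset $gN$ is fixed by $\bar\alpha$ precisely when $g^{-1}(g\alpha)\in N$, i.e.\ when $g\in\tilde\alpha^{-1}(N)$. Since $\tilde\alpha$ is a bijection, $|\tilde\alpha^{-1}(N)|=|N|$; since $N$ is $\alpha$-invariant, this preimage is a union of cosets of $N$; hence it consists of the single coset $N$ itself, and $\bar\alpha$ is fixed-point-free on $G/N$. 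Combined with the minimality of $|G|$, this forces the solvable radical of $G$ to be trivial, for otherwise the quotient by it would yield a strictly smaller non-solvable counterexample.

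Consequently the socle $\soc G$ is non-trivial and of the form $T_1\times\cdots\times T_n$ with each $T_i$ a non-abelian finite simple group, and $\alpha$ permutes the factors. It now suffices to construct a non-identity element of $\soc G$ fixed by $\alpha$. Consider one $\alpha$-orbit of factors, say $T_1,\dots,T_r$ with $T_i\alpha=T_{i+1}$ (subscripts modulo $r$). A tuple $(s_1,\dots,s_r)\in T_1\times\cdots\times T_r$ is fixed by $\alpha$ if and only if $s_{i+1}=s_i\alpha$ for all $i$ modulo $r$, which unwinds to the single requirement that $s_1$ be fixed by the automorphism $\alpha^r$ of $T_1$. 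Now apply the FSGC-based fact that every automorphism of a non-abelian finite simple group has a non-identity fixed point (Theorem~1.48 of~\cite{gorenst}) to produce such an $s_1\neq 1$; defining $s_{i+1}:=s_i\alpha$ then yields a non-identity element of $\soc G\leq G$ fixed by $\alpha$, which is the desired contradiction.

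The main obstacle is entirely concentrated in this last step: the whole argument reduces the problem to the simple-group fixed-point statement, for which no proof independent of the FSGC is known. All the other reductions are essentially bookkeeping, so the depth of the lemma lies in that single citation.
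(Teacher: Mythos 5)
The paper does not prove this lemma itself; it cites \cite[9.5.3]{ks} for the statement. Your argument is a correct and complete reconstruction of the standard proof: the reduction of fixed-point-freeness to $\alpha$-invariant quotients (via the bijectivity of $\tilde\alpha$ and the coset argument) correctly forces the solvable radical to be trivial, the socle is then a product of non-abelian simple groups permuted by $\alpha$, and the fixed-point condition on one orbit of factors correctly collapses to a fixed point of $\alpha^r$ on a single factor, which the FSGC supplies.
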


  The situation is rather different for the class of
  infinite simple groups.

\begin{example}\label{ex:uniaut}
 Let $\F$ be the
algebraic closure of the finite field $\F_p$ and consider
the group $G={\sf SL}_n(\F)$ with $n\geq 2$. Then $G$ is a connected algebraic group
(see~\cite[Exercise~2.2.2]{springer}). Further, the $p$-th powering map 
$(a_{i,j})\mapsto (a_{i,j}^p)$ defines an automorphism $\varphi:G\rightarrow G$
known as the {\em Frobenius automorphism}.\index{automorphism!Frobenius}
By Lang's Theorem~\cite[Theorem~4.4.17]{springer},
\index{Lang's Theorem}the map
$G\rightarrow G,\ g\mapsto g^{-1}(g\varphi)$ is surjective. Since the centre $\mathsf Z(G)$ is
invariant under automorphisms of $G$, $\varphi$ induces an automorphism
$\overline\varphi$
of the infinite simple group $\psl n\F\cong G/\mathsf Z(G)$ such that the map
$\psl n\F\rightarrow\psl n\F$ defined as 
$g\mapsto g^{-1}(g\overline\varphi)$ is surjective. Thus $\overline\varphi$ is a uniform
automorphism of the infinite simple group $\psl n\F$. Similar examples can be
constructed using other connected algebraic groups of Lie type.
\end{example}

Suppose that $G$ is a direct product $G=G_1\times\cdots\times G_k$
of groups $G_i$
and, for $i=1,\ldots,k$, let $\sigma_i:G\rightarrow G_i$ be
the coordinate projection. A subgroup $H$ of $G$ is said to be a
{\em strip} if, for all $i$, 
$H\sigma_i\cong H$ or $H\sigma_i=1$. If $H$ is a strip, then
$H$ is a diagonal subgroup in the sense that
there exist indices $1\leq i_1<\cdots<i_m\leq k$, a subgroup
$H_0\leq G_{i_1}$ and, for $j\in\{2,\ldots,m\}$, injective homomorphisms
$\alpha_j: H_0\rightarrow G_{i_j}$ such that
\begin{eqnarray}\label{hstrip}
H&=&\{(g_1,\ldots,g_k)\mid g_{i_1}\in H_0,\
g_{i_j}=g_{i_1}\alpha_j\mbox{ for $2\leq j\leq m$ and }\\
\nonumber &&g_i=1\mbox{ if }i\not\in\{i_1,\ldots,i_m\}\}.
\end{eqnarray}
The set $\{G_{i_1},\ldots,G_{i_m}\}$ is said to be the {\em support}
of the strip $H$ and is denoted by $\supp H$. The strip $H$ is said
to be {\em non-trivial}, if $|\supp H|\geq 2$, and it is said
to be {\em full} if $H\sigma_i=G_i$ whenever $G_i\in\supp H$.
Thus, if $H$, written as in~\eqref{hstrip},  is a full strip,
then  $H_0=G_{i_1}$ and the $\alpha_j$ are isomorphisms. In particular,
the $G_{i_j}$ are pairwise isomorphic for $j=1,\ldots,m$.

The existence of factorisations of direct products with 
strips as factors is related to the existence of uniform automorphisms.
It was proved in~\cite[Lemma~2.2]{bad:quasi} that
a finite direct power of a finite simple group cannot be factorised into
a product of two subgroups both of which are direct products of non-trivial
full strips. We generalise this result for a larger class of groups.
We start by proving the following lemma for two factors.

\begin{lemma}\label{orthstrip}
  The following assertions are equivalent for a group $G$.
  \begin{enumerate}
  \item There exist non-trivial full strips $X$ and $Y$
    in $G\times G$
    such that $G\times G= XY$.
  \item $G$ admits a uniform automorphism.
    \end{enumerate}
\end{lemma}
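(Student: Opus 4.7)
The plan is to parametrise non-trivial full strips in $G\times G$ and then compute their products directly. By the definition of strip recalled in~\eqref{hstrip}, a non-trivial full strip $H\leq G\times G$ has $\supp H=\{G_1,G_2\}$, $H_0=G$, and $\alpha_2$ an isomorphism of $G$; hence
\[
  H=D_\alpha\defeq\{(g,g\alpha)\mid g\in G\}
\]
for a unique $\alpha\in\aut G$, and every such $D_\alpha$ is a non-trivial full strip. Therefore any candidate factorisation in (1) has the form $G\times G=D_\alpha D_\beta$ with $\alpha,\beta\in\aut G$, and my task reduces to characterising the pairs $(\alpha,\beta)$ for which this equality holds.

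Next I would work out when $D_\alpha D_\beta=G\times G$. An element $(u,v)\in G\times G$ lies in $D_\alpha D_\beta$ if and only if there exist $g,h\in G$ with $gh=u$ and $(g\alpha)(h\beta)=v$; substituting $h=g^{-1}u$ reduces this to
\[
  (g\alpha)(g\beta)^{-1}=v(u\beta)^{-1}.
\]
Since the right-hand side ranges over all of $G$ as $(u,v)$ does, $D_\alpha D_\beta=G\times G$ if and only if the map $\Psi:G\to G$, $\Psi(g)=(g\alpha)(g\beta)^{-1}$, is surjective.

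The change of variable $g=h\beta^{-1}$ transforms $\Psi$ into $h\mapsto (h\gamma)h^{-1}$, where $\gamma\defeq\beta^{-1}\alpha\in\aut G$. A short manipulation using $h\mapsto h^{-1}$ shows that the image of $h\mapsto (h\gamma)h^{-1}$ is the set-theoretic inverse of the image of $h\mapsto h^{-1}(h\gamma)$, so the former map is surjective if and only if the latter is; that is, if and only if $\gamma$ is a uniform automorphism of $G$. Together with the previous step this gives the key equivalence
\[
  G\times G=D_\alpha D_\beta\quad\Longleftrightarrow\quad \beta^{-1}\alpha \text{ is a uniform automorphism of }G.
\]

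Both implications of the lemma now drop out. For (2)$\Rightarrow$(1), given a uniform automorphism $\gamma$ of $G$, take $X=D_\gamma$ and $Y=D_{\id}$, so that $\beta^{-1}\alpha=\gamma$. For (1)$\Rightarrow$(2), writing the given strips as $X=D_\alpha$ and $Y=D_\beta$ produces the uniform automorphism $\beta^{-1}\alpha$. The main obstacle, such as it is, lies not in any of these steps individually but in the bookkeeping: one must be careful about the order of composition and the placement of inverses in order to align the product $D_\alpha D_\beta$ with the precise map $g\mapsto g^{-1}(g\gamma)$ featuring in the definition of \emph{uniform}; the rest is routine.
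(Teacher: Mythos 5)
Your proof is correct and takes essentially the same route as the paper: both identify non-trivial full strips in $G\times G$ with graphs $D_\alpha=\{(g,g\alpha)\mid g\in G\}$ of automorphisms and reduce the factorisation $D_\alpha D_\beta=G\times G$ to surjectivity of a map of the form $g\mapsto g^{-1}(g\gamma)$. The paper extracts $\gamma=\alpha\beta^{-1}$ directly by solving $(g,1)\in D_\alpha D_\beta$ and proves the converse by an explicit construction, while you package both directions into one equivalence with $\gamma=\beta^{-1}\alpha$ via an inversion/change-of-variable trick; the two formulations coincide because $\alpha\beta^{-1}$ and $\beta^{-1}\alpha$ are conjugate in $\aut G$ and uniformity is conjugation-invariant.
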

\begin{proof}
  Suppose that $G\times G=XY$ with $X$ and $Y$
  non-trivial full strips of $G\times G$.
  Then there exist $\alpha,\ \beta\in\aut G$ such that
  $X=\{(g,g\alpha)\mid g\in G\}$ and
  $Y=\{(g,g\beta)\mid g\in G\}$. If $g\in G$, then there exist
$g_1,\ g_2\in G$,  such that $(g,1)=(g_1^{-1},g_1^{-1}\alpha)(g_2,g_2\beta)$. 
Thus $g=g_1^{-1}g_2$ and $1=(g_1^{-1}\alpha)(g_2\beta)$, and so 
$g_2=g_1\alpha\beta^{-1}$.  Hence $g=g_1^{-1}(g_1\alpha\beta^{-1})$, which
implies that
the map $x\mapsto x^{-1}(x\alpha\beta^{-1})$ is surjective. That is, the 
automorphism $\alpha\beta^{-1}$ is uniform.

Conversely, assume that $\alpha\in\aut G$ is uniform.
Set $X=\{(g,g)\mid g\in G\}$ and $Y=\{(g,g\alpha)\mid g\in G\}$.
Let $(x,y)\in G\times G$. Choose $h\in G$ such that $h^{-1}(h\alpha)=x^{-1}y$
(such an $h$ exists, as $\alpha$ is uniform)
and let $g=xh^{-1}$. Then $g(h\alpha)=xh^{-1}(h\alpha)=y$ and $gh=x$. Thus
$(g,g)(h,h\alpha)=(x,y)$. Therefore $XY=G\times G$.
\end{proof}

Finite groups with uniform automorphisms do exist. For example, the 
automorphism $\alpha:x\mapsto x^{-1}$ of a finite abelian group $G$ of odd order
is uniform. In this case, we do in fact obtain a  factorisation
$$
G\times G=\{(g,g)\mid g\in G\}\{(g,g\alpha)\mid g\in G\}.
$$


\begin{lemma}\label{doublestrips}
  Suppose that $T$ is a group and, for $i=1,\ldots,d$,
  let $\alpha_i,\ \beta_i\in \aut T$. Consider the following subgroups
  $X$ and $Y$ of $T^{2d}$:
  \begin{eqnarray*}
    X&=&\{(t_1,t_1\alpha_1,t_2,t_2\alpha_2\ldots,t_d,t_d\alpha_d)\mid
    t_i=T\} \mbox{ and }\\
    Y&=&\{(s_d\beta_d,s_1,s_1\beta_1,s_2,s_2\beta_2,\ldots,s_{d-1},s_{d-1}\beta_{d-1},s_d)\mid s_i\in T\}.
\end{eqnarray*}
Then the following are equivalent:
\begin{enumerate}
\item $XY=T^{2d}$;
\item the automorphism $\alpha_1\beta_1\alpha_2\beta_2\cdots\alpha_d\beta_d$ of $T$ is uniform.    
\end{enumerate}
\end{lemma}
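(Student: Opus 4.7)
The plan is to parametrise elements of $X$ and $Y$ by tuples $(t_1,\ldots,t_d)$ and $(s_1,\ldots,s_d)$, write down the coordinate-by-coordinate equations forcing the product of such a pair to equal a prescribed target $(u_1,v_1,\ldots,u_d,v_d)\in T^{2d}$, eliminate the $s_i$, and reduce the entire system to a single equation in the unknown $t_1$ whose solvability is exactly the uniformity of $\Gamma\defeq\alpha_1\beta_1\alpha_2\beta_2\cdots\alpha_d\beta_d$.

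Concretely, the even-indexed equations have the form $(t_i\alpha_i)s_i=v_i$, so $s_i=(t_i\alpha_i)^{-1}v_i$. Substituting into the odd-indexed equations $t_{i+1}(s_i\beta_i)=u_{i+1}$ for $i=1,\ldots,d-1$, and using that $\beta_i$ is an automorphism, produces the recurrence
\[
t_{i+1}=w_{i+1}(t_i\gamma_i),\qquad \gamma_i\defeq\alpha_i\beta_i,\ w_{i+1}\defeq u_{i+1}(v_i^{-1}\beta_i),
\]
while the wrap-around equation $t_1(s_d\beta_d)=u_1$ becomes $t_1=w_1(t_d\gamma_d)$ with $w_1\defeq u_1(v_d^{-1}\beta_d)$. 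Iterating the recurrence yields
\[
t_d\gamma_d=(w_d\gamma_d)(w_{d-1}\gamma_{d-1}\gamma_d)\cdots(w_2\gamma_2\cdots\gamma_d)(t_1\Gamma);
\]
collecting the factors independent of $t_1$ into a single element $W\in T$, the wrap-around equation collapses to $t_1=W(t_1\Gamma)$, that is, $W=t_1(t_1\Gamma)^{-1}$. Setting $r\defeq t_1^{-1}$ rewrites this as $W=r^{-1}(r\Gamma)=\tilde\Gamma(r)$, which is solvable for every $W\in T$ precisely when $\tilde\Gamma$ is surjective, i.e.\ when $\Gamma$ is uniform.

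The equivalence then follows at once. If $\Gamma$ is uniform, then for every $(u_i,v_i)$ one solves for $r$, recovers $t_1,\ldots,t_d$ from the recurrence, and reads off the $s_i$ from the elimination formulas, giving $XY=T^{2d}$. Conversely, if $XY=T^{2d}$, then an arbitrary $W\in T$ can be realised by fixing $u_2,\ldots,u_d$ and $v_1,\ldots,v_d$ and varying $u_1$, which makes $w_1$, and hence $W$, sweep through all of $T$; therefore $\tilde\Gamma$ is surjective and $\Gamma$ is uniform.

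The main obstacle is pure bookkeeping: one must track how each $\beta_i$ distributes over products and inverses in the non-abelian group $T$, and be careful that the cyclic wrap-around in the definition of $Y$ is what converts an otherwise free forward recurrence into a genuine consistency condition on $t_1$. The computation is a direct extension of the one in Lemma~\ref{orthstrip}, which corresponds essentially to the case $d=1$.
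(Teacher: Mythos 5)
Your proposal is correct and follows essentially the same route as the paper: both proofs write out the coordinate equations for $ts=x$ and chain through them to reduce the whole system to a single condition whose solvability is precisely the uniformity of $\alpha_1\beta_1\cdots\alpha_d\beta_d$. Your organisation is a touch cleaner, since eliminating the $s_i$ and collapsing to $W=\tilde\Gamma(r)$ handles both directions in one stroke, whereas the paper treats the two implications with separate explicit computations (using the special target $(x,1,\dots,1)$ for one direction and a backward recursion seeded by equation~\eqref{seq} for the other), but the underlying calculation is the same.
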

\begin{proof}
Assume that $XY=T^{2d}$ and
let $x\in T$. Then there exist
\begin{eqnarray*}
    (t_1^{-1},t_1^{-1}\alpha_1,t_2^{-1},t_2^{-1}\alpha_2,
    \ldots,t_d^{-1},t_d^{-1}\alpha_d)&\in& X\mbox{ and }\\
    (s_d\beta_d,s_1,s_1\beta_1,\ldots,s_{d-1},s_{d-1}\beta_{d-1},s_d)&\in& Y
\end{eqnarray*}
such that $(x,1,\ldots,1)$ is equal to
$$(t_1^{-1},t_1^{-1}\alpha_1,t_2^{-1},t_2^{-1}\alpha_2,\ldots,t_d^{-1},t_d^{-1}\alpha_d)\cdot
(s_d\beta_d,s_1,s_1\beta_1,\ldots,s_{d-1},s_{d-1}\beta_{d-1},s_d).
$$
Comparing the entries for these two expressions, we obtain that
\begin{eqnarray*}
  x&=&t_1^{-1}(s_{d}\beta_d)\\
  t_1&=&s_1\alpha_1^{-1}\\
  t_2&=&s_1\beta_1\\
  t_2&=&s_2\alpha_2^{-1}\\
  &\vdots&\\
  t_d&=&s_{d-1}\beta_{d-1}\\
  t_d&=&s_{d}\alpha_d^{-1}.
\end{eqnarray*}
Hence  $s_{i}=s_{i-1}\beta_{i-1}\alpha_{i}$ for $i=2,\ldots,d$.
Thus, by induction,
$s_d=s_1\beta_1\alpha_2\cdots\beta_{d-1}\alpha_d$.
Then setting $s=s_1\alpha_1^{-1}$ and using the first two equations in the system above,
we obtain 
$$
x=(s_1\alpha_1^{-1})^{-1}(s_1 \beta_1\alpha_2\cdots\beta_{d-1}\alpha_d\beta_d)=
s^{-1}(s\alpha_1\beta_1\ldots\alpha_d\beta_d).
$$
Since we chose
$x\in T$ arbitrarily, the automorphism
$\alpha_1\beta_1\ldots\alpha_d\beta_d$ of $T$ is uniform.

Conversely, assume that $\alpha\defeq\alpha_1\beta_1\ldots\alpha_d\beta_d$ is a 
uniform automorphism of $T$.
Let $x=(x_1,\ldots,x_{2d})$ be an arbitrary
element of $T^{2d}$. Since $\alpha$ is uniform, there exists $s_0\in T$  such that
\begin{equation}\label{seq}
  s_0^{-1}(s_0\alpha)=\prod_{i=d}^1\left(\left(x_{2i}^{-1}\beta_i\alpha_{i+1}\cdots\alpha_d\beta_d\right)
  \left(x_{2i-1}\alpha_i\beta_i\cdots\alpha_d\beta_d\right)\right).
\end{equation}
Let $s_d=s_0\beta_d^{-1}$ and $t_d=(s_dx_{2d}^{-1})\alpha_d^{-1}$. We next
define sequence 
$$
s_{d-1},\ t_{d-1},\ s_{d-2},\ldots,s_1,\ t_1
$$
`backwards recursively' in the sense that we define,
for $i=d-1,\ldots,1$, the elements $s_i$ and $t_i$
assuming that we have defined the elements $s_{i+1}$ and $t_{i+1}$:
\begin{eqnarray*}
  s_i&=&(t_{i+1}x_{2i+1})\beta_i^{-1};\\
    t_i&=&(s_ix_{2i}^{-1})\alpha_i^{-1}.
\end{eqnarray*}
Now let
\begin{eqnarray*}
t&=&(t_1^{-1},t_1^{-1}\alpha_1,\ldots,t_{d}^{-1},t_d^{-1}\alpha_d)\quad\mbox{and}\\
s&=&(s_d\beta_d,s_1,s_1\beta_1,\ldots,s_{d-1},s_{d-1}\beta_{d-1},s_d).
\end{eqnarray*}
We claim that $ts=x$. The definition of $t_i$ and $s_i$ implies that
\begin{eqnarray*}
  x_{2i}&=&(t_i^{-1}\alpha_i)s_i\quad\mbox{and }\\
  x_{2i+1}&=&t_{i+1}^{-1}(s_i\beta_i)
\end{eqnarray*}
for all indices $2i$ and $2i+1$ between $2$ and $2d$. This shows that the
product $ts$ agrees with $x$ from the second coordinate onwards.
It remains to show that $ts$ agrees with $x$ in the first coordinate also.
Using the equations for $t_i$ and  $s_i$, we obtain
by induction that
\begin{multline*}
t_1=(s_d\alpha_d^{-1}\beta_{d-1}^{-1}\cdots\beta_1^{-1}\alpha_1^{-1})
(x_{2d}^{-1}\alpha_d^{-1}\beta_{d-1}^{-1}\cdots\beta_1^{-1}\alpha_1^{-1})\\
(x_{2d-1}\beta_{d-1}^{-1}\cdots\beta_1^{-1}\alpha_1^{-1})\cdots
(x_2^{-1}\alpha_1^{-1}).
\end{multline*}
Applying $\alpha$ to the last equation and considering
the definition of $s_d$, we obtain 
\begin{multline*}
t_1\alpha= (s_d\beta_d)(x_{2d}^{-1}\beta_d)(x_{2d-1}\alpha_d\beta_d)
\cdots (x_2^{-1}\beta_1\alpha_2\beta_2\cdots\alpha_{d}\beta_d)=\\
s_0(x_{2d}^{-1}\beta_d)(x_{2d-1}\alpha_d\beta_d)
\cdots (x_2^{-1}\beta_1\alpha_2\beta_2\cdots\alpha_{d}\beta_d)
\end{multline*}
which gives (using~\eqref{seq} for the last equality in the next line)
\begin{multline*}
s_0^{-1}(t_1\alpha)(x_1\alpha)=
s_0^{-1}(t_1\alpha)(x_1\alpha_1\beta_1\alpha_2\beta_2\cdots\alpha_{d}\beta_d)=\\
(x_{2d}^{-1}\beta_d)(x_{2d-1}\alpha_d\beta_d)
\cdots (x_2^{-1}\beta_1\alpha_2\beta_2\cdots\alpha_{d}\beta_d)
(x_1\alpha_1\beta_1\alpha_2\beta_2\cdots\alpha_{d}\beta_d)=s_0^{-1}(s_0\alpha).
\end{multline*}
This implies that $x_1=t_1^{-1}s_0$ which, in turn, equals $t_1^{-1}(s_d\beta_d)$. Therefore 
$ts$ agrees with $x$ in its first coordinate also, and so
$ts=x$. 
Since the choice of the element $x$ was arbitrary,  $XY=T^{2d}$.
\end{proof}

If $M$ is the direct product $M=T_1\times\cdots\times T_k$ and 
 $I$ is a subset of $\{T_1,\ldots,T_k\}$ or
a subset of $\{1,\ldots,k\}$, then $\sigma_I$ denotes the corresponding
coordinate projection from $M$ to $\prod_{T_i\in I} T_i$ or to
$\prod_{i\in I} T_i$, respectively.

The proof of the
following lemma uses some simple graph theoretic concepts. The graphs that
occur in this proof are undirected graphs without multiple edges and loops.
A graph which does not contain a cycle is said to be a {\em forest},
while a connected graph without a cycle is  a {\em tree}.
The {\em valency} of a vertex $v$ in a graph is the number of vertices
that are adjacent to $v$. 
A vertex $v$ in a forest with valency one is said to be
a {\em leaf}.

We now prove Theorem~\ref{nostripfact}.

\begin{proof}[The proof of Theorem~\ref{nostripfact}]
  By assumption, $X=X_1\times\cdots\times X_r$ and
  $Y=Y_1\times\cdots\times Y_s$, where the $X_i$ and $Y_i$ are
  non-trivial full strips. Suppose that $T^k=XY$. At the end of the
  proof, this will lead to a contradiction.

  Let $\Gamma$ be the
  graph with vertex set $\{X_1,\ldots,X_r,Y_1,\ldots,Y_s\}$
  such that two vertices of $\Gamma$ are adjacent
  if and only if the supports of the strips are not disjoint.
  First note that two
  strips that belong to $X$ are disjoint, and the same is true for
  two strips that belong to $Y$. Hence
  $\Gamma$ is a bipartite graph with bipartition $\{X_1,\ldots,X_r\}\cup
  \{Y_1,\ldots,Y_s\}$.
  We prove the result by proving a series of claims. Recall that 
   $T^k=XY$. Suppose that $T_1,\ldots,T_k$ are the internal direct
  factors of $T^k$; that is $T_1,\ldots,T_k$ are normal subgroups of
  $M$ such that $M=T^k=T_1\times\cdots\times T_k$. 
  
\smallskip

\noindent{\em Claim 1.}
Each vertex of $\Gamma$ lies on at least one edge.

\smallskip

\noindent{\em Proof of Claim 1.}
If, say, $X_1$ lies on no edge of $\Gamma$, then $\supp X_1\cap \supp Y_i=\emptyset$ for each $i$, and so $\supp X_1\cap\supp Y
=\emptyset$. Thus  if $\sigma$ is the projection of $M$ onto
$\prod_{T_i\in\supp X_1}T_i$, then $Y\sigma=1$, and so
$$M\sigma=(XY)\sigma=
X\sigma=X_1\cong T,
$$
contradicting the fact that $X_1$ is non-trivial. Thus
$X_1$ lies on at least one edge and the same proof shows that each $X_i$ and
each $Y_i$ lies on at least one edge. \hfill$\Box$

\smallskip
  
\noindent {\em Claim 2.}
If $X_{j_1}$ and $Y_{j_2}$ are adjacent in $\Gamma$, then
  $|\supp X_{j_1}\cap\supp Y_{j_2}|=1$. 

\smallskip

\noindent{\em Proof of Claim 2.}
This follows from Lemma~\ref{orthstrip}. Indeed, if $T_{i_1},\ T_{i_2}\in
\supp X_{j_1}\cap \supp Y_{j_2}$ and $\sigma=\sigma_{\{i_1,i_2\}}$,
  then $X_{j_1}\sigma$ and $Y_{j_2}\sigma$ are non-trivial full strips in
  $T_{i_1}\times T_{i_2}$ such that $T_{i_1}\times T_{i_2}=(X\sigma)(Y\sigma)=
  (X_{j_1}\sigma)(Y_{j_2}\sigma)$.
  Since $T$ does not admit a uniform automorphism,
  this is a contradiction, by Lemma~\ref{orthstrip}.\hfill$\Box$

  \smallskip

By Claim~2, an edge in $\Gamma$ that connects
$X_{j_1}$ and $Y_{j_2}$ can be labelled with $T_i$ where $\{T_i\}=
\supp X_{j_1}\cap
\supp Y_{j_2}$.

\smallskip
  
  \noindent{\em  Claim 3.}
  $\Gamma$ does not contain a cycle.

  \smallskip

  \noindent{\em Proof of Claim 3.}
  Suppose to the contrary that $\Gamma$ contains a cycle, and choose a cycle $X_1,Y_1,\ldots,X_d,Y_d$ of shortest length $2d$. Then
  $2d\geq 4$ and  the $X_i$ and the $Y_i$ are pairwise distinct.
  By reordering the $T_i$, we may also
  assume without
  loss of generality that
  \begin{eqnarray*}
    \{T_1\}&=&\supp Y_d\cap \supp X_1;\\
    \{T_2\}&=&\supp X_1\cap\supp Y_1;\\
    \{T_3\}&=&\supp Y_1\cap \supp X_2;\\
    &\vdots&\\
    \{T_{2i-1}\}&=&\supp Y_{i-1}\cap\supp X_i;\\
    \{T_{2i}\}&=&\supp X_{i}\cap\supp Y_i;\\
    &\vdots&\\
    \{T_{2d-1}\}&=&\supp Y_{d-1}\cap \supp X_d;\\
    \{T_{2d}\}&=&\supp X_d\cap \supp Y_d.
  \end{eqnarray*}
  In particular, $\supp X_i\cap\{T_1,\ldots,T_{2d}\}=\{T_{2i-1},T_{2i}\}$,
  $\supp Y_i\cap\{T_1,\ldots,T_{2d}\}=\{T_{2i},T_{2i+1}\}$ for $i=1,\ldots,d-1$,
  and $\supp Y_d\cap\{T_1,\ldots,T_{2d}\}=\{T_{2d},T_{1}\}$.
  The factors $T_{2i}$, with $i=1,\ldots,d$, are pairwise distinct, since the $\supp X_i$ are
  pairwise disjoint. Similarly the factors $T_{2i-1}$, with $i=1,\ldots,d$,
  are pairwise distinct since the $\supp Y_i$ are pairwise disjoint.
  Suppose that $T_{2i}=T_{2j-1}$ where $1\leq i,\ j\leq d$.
  Since $T_{2j-1}\in\supp X_j$ and $T_{2i}\in\supp X_i$, we have $i= j$.
  But then $T_{2i-1}\in\supp Y_{i-1}$ while $T_{2i}\in\supp Y_i$ contradicting
  the fact that $\supp Y_{i-1}\cap\supp Y_{i}=\emptyset$.
  Thus all the $T_i$ are pairwise distinct.
Let
  $\sigma$ denote the projection $\sigma_{\{1,\ldots,2d\}}$.
    Then $X\sigma=X_1\sigma\times \cdots \times X_d\sigma$ and
    $Y\sigma=Y_1\sigma\times\cdots\times Y_d\sigma$. Further,
    for $i=1,\ldots,d$, $X_i\sigma$ is a non-trivial full strip in
    $T_{2i-1}\times T_{2i}$, for $i=1,\ldots,d-1$, $Y_i\sigma$ is a
    non-trivial full strip in $T_{2i}\times T_{2i+1}$, while
    $Y_d\sigma$ is a non-trivial full strip in $T_{2d}\times T_1$.
    Thus there exist isomorphisms $\alpha_i:T_{2i-1}\rightarrow T_{2i}$
    (for $i=1,\ldots,d$), $\beta_i:T_{2i}\rightarrow T_{2i+1}$
    (for $i=1,\ldots,d-1)$ and $\beta_d:T_{2d}\rightarrow T_1$
    such that
    \begin{eqnarray*}
    X\sigma&=&\{(t_1,t_1\alpha_1,t_2,t_2\alpha_2,\ldots,t_d,t_d\alpha_d)
    \mid t_i\in T_{2i-1}\}\mbox{ and}\\
    Y\sigma&=&
    \{(s_d\beta_d,s_1,s_1\beta_1,\ldots,s_{d-1},s_{d-1}\beta_{d-1},s_d)
    \mid s_i\in T_{2i}\}.
    \end{eqnarray*}
    Since $XY=T^k$, we find that
    $(X\sigma)(Y\sigma)=T_1\times\cdots\times T_{2d}$.
    By Lemma~\ref{doublestrips}, the automorphism
    $\alpha_1\beta_1\cdots\alpha_d\beta_d$ of $T_1$ is uniform.
    This is a contradiction. Hence  $\Gamma$ does not
contain a cycle.\hfill$\Box$

\smallskip

Since $\Gamma$
does not contain
a cycle, the graph $\Gamma$ is a forest with no-isolated vertices.
This verifies
at once  our next claim

\smallskip

\noindent{\em Claim~4.} There are at least two leaves in $\Gamma$.
\hfill$\Box$

\smallskip

Set $\T=\{T_1,\ldots,T_k\}$,
$$
\mathcal S_1=\bigcup_{i=1}^r\supp X_i\quad\mbox{and}\quad
\mathcal S_2=\bigcup_{i=1}^s\supp Y_i.
$$
For $i=1,\ 2$, let $a_i=|\T\setminus\mathcal S_i|$.
Suppose without loss of generality that $X_{1}$
has
valency one in $\Gamma$. Since $X_1$ is a non-trivial strip,
$|\supp X_1|\geq 2$, and hence there must be some $T_i\in\supp X_1$ that
is not covered by a strip in $Y$. Assume without loss of generality that
$i=1$. This implies that $T_1\in\T\setminus \mathcal S_2$, and so $a_2=|\T\setminus\mathcal S_2|\geq 1$.

\smallskip

\noindent{\em Claim 5.} Some $Y_i$ has valency~one, and hence
$a_1\geq 1$ as well as $a_2\geq 1$.

\smallskip

\noindent{\em Proof of Claim~5.}
Suppose to the contrary that every $Y_i$
has valency at least two. Then a second strip of
$X$, $X_2$ say, also has valency $1$, so  some $T_i\in\supp X_2$
with $i\geq 2$
also lies in $\T\setminus \mathcal S_2$. Let $Y_{s+1}$ be a full strip
such that $\supp Y_{s+1}=\T\setminus\mathcal S_2$.
Since $|\T\setminus\mathcal S_2|\geq 2$, the strip $Y_{s+1}$ is
non-trivial. Set $\bar Y=Y\times Y_{s+1}$.
Since $T^k=XY$, we have  $T^k=X\bar Y$. However, the
graph that corresponds to the factorisation $T^k=X\bar Y$ has
no vertices of valency 1, which contradicts  Claim~4 applied
 to the graph corresponding to the factorisation $T^k=X\bar Y$. 
Thus $Y_i$ has valency one for some $i$, and hence $a_1\geq 1$ also.
\hfill$\Box$

\smallskip

\noindent{\em Claim 6.} $a_1=a_2=1$.

\smallskip

\noindent{\em Proof of Claim 6.}
If $a_1\geq 2$ and $a_2\geq 2$, then let $X_{r+1}$ and $Y_{s+1}$
be full strips such that $\supp X_{r+1}=\T\setminus\mathcal S_1$
and $\supp Y_{s+1}=\T\setminus\mathcal S_2$. Since $a_1,\ a_2\geq 2$,
$X_{r+1}$ and $Y_{s+1}$ are non-trivial strips. Set $\bar X=X\times X_{r+1}$
and $\bar Y=Y\times Y_{s+1}$. Since $T^k=XY$, we have  $T^k=\bar X
\bar Y$. However, the graph that corresponds to the factorisation
$T^k=\bar X\bar Y$ has no vertex of valency one, which contradicts
Claim~4 applied to the graph of the factorisation $T^k=\bar X\bar Y$. 

Thus $\min\{a_1,a_2\}=1$, 
and without loss of generality we may assume that $a_1=1$. 
If $a_2\geq 2$, 
then we may construct
$Y_{s+1}$ and $\bar Y$ as in the proof of  Claim~5. The graph that
corresponds to the  factorisation $T^k=X\bar Y$ has only one vertex of
valency one contradicting Claim 4.
\hfill$\Box$

\smallskip

Let us now obtain a final contradiction.
It follows from Claims~4 and~6 that
there is  
exactly one strip in $X$ with valency~1 and  exactly one such strip in
$Y$. All other strips have valency at least~2.
A forest with precisely two leaves is a path. Hence $\Gamma$ is a path
of the  form
$$
X_1-Y_1-\cdots-X_r-Y_r.
$$
The valencies of $X_1$ and 
of $Y_r$ are equal to one. Further, the valency of each
of $Y_1,X_2,\ldots,X_r$ is equal to two.
Hence $|\supp X_i|=|\supp Y_i|=2$ for all $i$.
Let $$
z=|\{(X_i,T_j)\mid T_j\in\supp X_i\}|.
$$
Since
$|\supp X_i|=2$ for all $i$, we have $z=2r$. On the other hand,
by Claim~6, there exists a unique $j_0$ such that $T_{j_0}\not\in\bigcup_i\supp X_i$
and if $j\neq j_0$ then there is a unique $X_i$ such that $T_j\in\supp X_i$.
Thus $2r=z=k-1$ and so $k=2r+1$. 
By possibly reordering the $T_i$, we may assume that
there exist, for $i=1,\ldots,r$, isomorphisms
$\alpha_i:T_{2i-1}\rightarrow T_{2i}$ and $\beta_i:T_{2i}\rightarrow T_{2i+1}$
such that
\begin{eqnarray*}
X&=&\{(t_1,t_1\alpha_1,t_2,t_2\alpha_2,\ldots,t_r,t_r\alpha_r,1)\mid
t_i\in T_{2i-1}\}\mbox{ and}\\
Y&=&\{(1,s_1,s_1\beta_1,s_2,s_2\beta_2,\ldots,s_r,s_r\beta_r)\mid
s_i\in T_{2i}\}.
\end{eqnarray*}
Suppose that $x\in T_1$. Then there are  $t_i\in T_{2i-1}$
and $s_i\in T_{2i}$ such that $(x,1,\ldots,1)$ equals
\begin{equation*}
(t_1^{-1},t_1^{-1}\alpha_1,t_2^{-1},t_2^{-1}\alpha_2,\ldots,
t_r^{-1},t_r^{-1}\alpha_r,1)\cdot
(1,s_1,s_1\beta_1,s_2,s_2\beta_2,\ldots,s_r,s_r\beta_r).
\end{equation*}
Comparing the entries from  the $k$-th to the $1$-st in order, we obtain that $s_i=t_i=1$ for all
$i$, which is a contradiction if $x\neq 1$. Thus Theorem~\ref{nostripfact}
is proved.
\end{proof}

The following example shows, for a group $G$ that admits uniform
automorphisms with some additional properties, that the direct product
$G^k$ may admit factorisations with subgroups that involve longer strips.

\begin{example}\label{g6example}
  Consider a group $G$. We wish to factorise $G^6$ as $G^6=XY$ where
  $X$ is a
  direct product of two strips of length three and $Y$ is a direct product of
  three strips of length two. Suppose
  that there exist uniform automorphisms $\alpha_2,\ \alpha_3$  of $G$
  such that for all $y_1,\ y_2\in G$ there exists $t$ in $G$ such that
  both $t^{-1}(t\alpha_2)=y_1$ and $t^{-1}(t\alpha_3)=y_2$. In other words,
  the map $G\rightarrow G\times G$ defined by $t\mapsto (t^{-1}(t\alpha_2),
  t^{-1}(t\alpha_3))$ is surjective. If $G$ is a non-trivial finite group, then
  $|G\times G|>|G|$, and hence such automorphisms do not exist for finite
  $G$.
  
Set
\begin{eqnarray*}
X &=& \{(t,t,t,s,s,s) \mid t,\ s \in G \};\\
Y &=& \{(t_1,t_2,t_3,t_1,t_2\alpha_2,t_3\alpha_3) \mid t_i \in G\}.
\end{eqnarray*}

We claim that $G^6=XY$. Indeed, let
$(x_1,\ldots,x_6) \in G^6$. Choose $t$ in $G$ such that
$t(t^{-1}\alpha_2) = x_1x_4^{-1}x_5(x_2^{-1}\alpha_2)$
and
$t(t^{-1}\alpha_3) = x_1x_4^{-1}x_6(x_3^{-1}\alpha_3)$.
Let $t_1\in G$ such that $tt_1=x_1$. Then
it follows by the assumptions above that
\begin{equation*}
(t,t,t,x_4t_1^{-1},x_4t_1^{-1},x_4t_1^{-1})\cdot
(t_1,t^{-1}x_2,t^{-1}x_3,t_1,(t^{-1}x_2)\alpha_2,(t^{-1}x_3)\alpha_3)=(x_1,\ldots,x_6).
\end{equation*}
Therefore $G^6=XY$.
  \end{example}

At the time of writing, we do not know if there exists an infinite simple group admitting
a pair of automorphisms as in Example~\ref{g6example},
and hence we state the following problem. 

\begin{problem}
  Exhibit a group $G$ that admits a pair $(\alpha,\beta)$ of
  automorphisms   such that the map $G\rightarrow
  G\times G$ defined by $g\mapsto(g^{-1}(g\alpha),g^{-1}(g\beta))$ is surjective;
  or prove that no such group exists.
  \end{problem}

\section{Abstract cartesian factorisations involving strips}\label{4}


A characteristically simple group $M$ is said to be {\em finitely completely
  reducible} (\FCR)  if it is
isomorphic to a the direct product $T^k$ for a simple group $T$
and for an integer $k\geq 1$.
A finite characteristically simple group is  FCR.
Suppose that $M$ is a group and $\K=\{K_1,\ldots,
K_\ell\}$ is a family of proper subgroups of $M$. Then $\K$ is said
to be an {\em abstract cartesian factorisation} of $M$ if
\begin{equation}\label{cf}
  M=K_i\left(\bigcap_{j\neq i}K_j\right)\quad\mbox{for all}\quad i\in\{1,\ldots,\ell\};
\end{equation}
the subgroups $K_i$ of an abstract cartesian factorisation
are called {\em cartesian factors} of $M$.

Cartesian factorisations were introduced in~\cite{cs}, where
they were called `cartesian systems of subgroups', to
characterise cartesian decompositions preserved by innately transitive groups.
In this section we investigate cartesian factorisations 
in characteristically simple \FCR-groups in which the cartesian factors
involve strips. Let us introduce notation for this section.
Let $M=T_1\times\cdots\times T_k$ be a  
characteristically simple
\FCR-group with simple normal subgroups $T_1,\ldots,T_k$, and let $G_0$ be a subgroup of $\aut M$ such that the natural
action of $G_0$ on $T_1,\ldots,T_k$ is transitive.
Suppose, in addition,   that $\K=\{K_1,\ldots,K_\ell\}$ is an
abstract $G_0$-invariant cartesian factorisation of
$M$. Set $M_0=\bigcap_i K_i$. If $T$ is abelian, then $T$ is cyclic of prime
order $p$, and $M$ can be considered
as a finite-dimensional vector space over $\F_p$. In this case a cartesian
factorisation of $M$ is essentially a direct sum decomposition. Therefore
we assume that $M$ is non-abelian.

Recall from Section~\ref{set:factstrip} that if
 $I$ is a subset of $\T=\{T_1,\ldots,T_k\}$ or
a subset of $\{1,\ldots,k\}$, then $\sigma_I$ denotes the corresponding
coordinate projection from $M$ onto either $\prod_{T_i\in I}T_i$ or onto
$\prod_{i\in I}T_i$, respectively.

A strip
$X$ is {\em involved} in a subgroup $K$ of $M$ if  $K=X\times
K\sigma_{\mathcal T\setminus\supp X}$ where $\T=\{T_1,\ldots,T_k\}$. We say that
a strip $X$ is involved in a cartesian factorisation $\K$ if $X$ is involved
in a member $K\in\K$. In this case, \eqref{cf} implies that $X$ is involved
in a unique member of $\K$. Uniform automorphisms were introduced
in Section~\ref{set:factstrip}.
In this section we prove the following theorem.

\begin{proposition}\label{mainstripfact}
Suppose that $\K=\{K_1,\ldots,K_\ell\}$, $M=T_1\times\cdots\times T_k$, 
$M_0$, 
and $G_0$ are as above and let $X_1$ and $X_2$ be two non-trivial full strips
involved in $\K$ such that $\supp X_1\cap\supp X_2\neq\emptyset$. Then
the following both hold:
\begin{enumerate}
\item $T_1$ admits a uniform automorphism;
\item $M_0$ is not a subdirect subgroup of $M$.
\end{enumerate}
\end{proposition}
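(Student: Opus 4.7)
The plan is to reduce both assertions to Theorem~\ref{nostripfact} via carefully chosen projections onto subsets of $\T=\{T_1,\ldots,T_k\}$. Writing $I_j=\supp X_j$ for $j=1,2$ and $I_{12}=I_1\cap I_2$, I first observe that $X_1$ and $X_2$ must be involved in distinct members of~$\K$: if both were involved in the same $K\in\K$, the two decompositions $K=X_j\times K\sigma_{\T\setminus I_j}$ combined with $I_{12}\neq\emptyset$ would force, via the strip parametrisations, that $X_1=X_2$, against the hypothesis. Re-indexing if necessary, assume $X_j\le K_j$ for $j=1,2$; then the identity $M=K_1\bigcap_{j\neq 1}K_j$ together with $\bigcap_{j\neq 1}K_j\le K_2$ yields $M=K_1K_2$.

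For part~(1), I project $M=K_1K_2$ via $\sigma_{I_{12}}$. Since $I_{12}\subseteq I_j$, the decomposition $K_j=X_j\times K_j\sigma_{\T\setminus I_j}$ yields $K_j\sigma_{I_{12}}=X_j\sigma_{I_{12}}$, so $\prod_{T_i\in I_{12}}T_i=(X_1\sigma_{I_{12}})(X_2\sigma_{I_{12}})$. When $|I_{12}|\ge 2$ both factors are non-trivial full strips and the contrapositive of Theorem~\ref{nostripfact} immediately gives a uniform automorphism of~$T_1$. When $|I_{12}|=1$ the projection degenerates to $T=T\cdot T$ and carries no information; here the plan is to exploit the $G_0$-invariance of $\K$ and the transitivity of $G_0$ on~$\T$. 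Picking $g\in G_0$ that maps the unique element $T_a\in I_{12}$ to some $T_b\in I_1\setminus I_{12}$, the translates $X_1g,X_2g$ are strips involved in $K_1g,K_2g\in\K$; by combining the original strips with enough suitable $G_0$-translates, I aim to produce subgroups $X,Y\le T^n$ (for some $n\ge 2$), each a direct product of pairwise disjoint non-trivial full strips, satisfying $XY=T^n$, and then apply Theorem~\ref{nostripfact} directly.

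For part~(2), suppose for contradiction that $M_0$ is subdirect in~$M$. Since $M_0\le K_j$ and $K_j\sigma_{I_j}=X_j\cong T$, the subdirect condition $M_0\sigma_{T_i}=T_i$ for each $i$ forces the subgroup $M_0\sigma_{I_j}\le X_j$ to surject onto every coordinate of $I_j$; for non-abelian simple~$T$, this forces $M_0\sigma_{I_j}=X_j$ for $j=1,2$. Projecting further to $I_{12}$ gives $X_1\sigma_{I_{12}}=M_0\sigma_{I_{12}}=X_2\sigma_{I_{12}}$, which we call~$H$. In the case $|I_{12}|\ge 2$, the identity $\prod_{T_i\in I_{12}}T_i=(X_1\sigma_{I_{12}})(X_2\sigma_{I_{12}})$ from part~(1) becomes $H\cdot H=\prod_{T_i\in I_{12}}T_i$, and since $H\cdot H=H$ for a subgroup, $H=\prod_{T_i\in I_{12}}T_i$; but $H\cong T$ while $T^{|I_{12}|}$ is not simple when $|I_{12}|\ge 2$, contradicting the simplicity of~$T$. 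The case $|I_{12}|=1$ is again the obstacle and is to be handled by the $G_0$-translate construction from part~(1).

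The main obstacle for both parts is the case $|I_{12}|=1$: the natural projection to the common support degenerates, and one must replace it by a configuration obtained from $G_0$-translates. This is precisely where the hypotheses on $G_0$ (transitivity on~$\T$ and $G_0$-invariance of~$\K$) are essential, and it is where the argument genuinely generalises the finite setting of~\cite{bad:quasi} by avoiding the appeal to the FSGC.
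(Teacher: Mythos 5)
Your treatment of the case $|I_{12}|\geq 2$ is correct and parallels what the paper does (via Lemma~\ref{orthstrip}): the projected factorisation $\prod_{T_i\in I_{12}}T_i=(X_1\sigma_{I_{12}})(X_2\sigma_{I_{12}})$ gives a uniform automorphism of $T$ by Theorem~\ref{nostripfact}, and the subdirectness argument forcing $X_1\sigma_{I_{12}}=X_2\sigma_{I_{12}}$ and hence $H=H\cdot H$ is essentially the paper's ``$\alpha=\beta$'' contradiction. The preliminary observations ($X_1,X_2$ involved in distinct members of $\K$, and $M=K_1K_2$) are also fine.

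The genuine gap is the case $|I_{12}|=1$, and it is not a corner case: it is the \emph{only} case the paper's proof has to work in (the paper's Claim~1 shows that, under the reductio hypothesis, the intersection has size exactly one). Your plan to ``produce subgroups $X,Y\leq T^n$, each a direct product of pairwise disjoint non-trivial full strips, by combining the original strips with $G_0$-translates'' is not a proof, and as stated it also does not match how the configuration actually arises. The paper does not build the two products of strips directly from $G_0$-translates of $X_1,X_2$. Instead it uses the $G_0$-transitivity and $G_0$-invariance of $\K$ to construct a \emph{cycle} of pairwise-overlapping strips $X_1,\ldots,X_a$ involved in $\K$, each meeting its two neighbours in a single common $T_i$; it then separates those $X_i$ that lie in $K_1$ from those that do not, projects onto the $2d$ ``joint'' factors of the cycle, and obtains $(K_1\sigma)(\widehat{K_1}\sigma)=T_1\times\cdots\times T_{2d}$ where $K_1\sigma$ is a product of two-coordinate strips $Y_i$ and $\widehat{K_1}\sigma$ sits inside a product of two-coordinate strips $Z_i$ in the precise ``cyclically staggered'' position required by Lemma~\ref{doublestrips}. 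It is Lemma~\ref{doublestrips} (not Theorem~\ref{nostripfact} applied blindly) that identifies the composite $\alpha_1\beta_1\cdots\alpha_d\beta_d$ as a uniform automorphism, and the explicit form of this automorphism is exactly what is needed for part~(2): if $M_0$ were subdirect then $M_0\sigma$, being contained in both $K_1\sigma$ and $\widehat{K_1}\sigma$, would force $t\alpha_1\beta_1\cdots\alpha_d\beta_d=t$ for all $t\in T_1$, i.e.\ the composite is the identity, contradicting its uniformity. None of this machinery — the cycle construction, the use of the cartesian-factorisation identity $M=K_1\widehat{K_1}$ (rather than $M=K_1K_2$), the appeal to Lemma~\ref{doublestrips} with its explicit automorphism — appears in your sketch, so the core of the argument is missing.
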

\begin{proof}
  The proof of this theorem is quite involved, and so we split it into a
  series of claims. Assume that the hypotheses of Proposition~\ref{mainstripfact} hold.
  Assume, moreover, that  either $T_1$ does not admit a uniform automorphism, or
  that $M_0$ is a subdirect subgroup of $M$.

\medskip
\noindent {\em Claim 1.}  $|\supp X_1\cap \supp X_2|=1$.

\medskip

\noindent {\em Proof of Claim 1.}
By the definition of `being involved' for  strips,  if $X_1$, $X_2$ are involved in the same
$K_i$ then
they are disjoint as strips. Thus we may assume without loss of generality
that $X_1$ is involved
in $K_{1}$ and $X_2$ is involved in $K_{2}$.
Suppose to
the contrary that $T_{1},\
T_{2}\in\supp X_1\cap \supp X_2$ and set $\sigma=\sigma_{\{1,2\}}$. Then
$Y_1=K_1\sigma$ and
$Y_2=K_2\sigma$ are non-trivial full 
strips in $T_1\times T_2$ such that $Y_1Y_2=M\sigma=T_1\times T_2$.
By Lemma~\ref{orthstrip}, $T_1$ admits
a uniform automorphism. Thus, by our own assumption, 
 $M_0$ is a subdirect subgroup of $M$. Then
$K_{1}\cap K_{2}$ is also a subdirect subgroup of $M$, and so 
$(K_{1}\cap K_{2})\sigma$ is a subdirect subgroup in $T_{1}\times T_{2}$.
Now $Y_1=\{(t,t\alpha)\mid t\in T_1\}$ and
$Y_2=\{(t,t\beta)\mid t\in T_1\}$ where $\alpha,\ \beta:T_1\rightarrow T_2$
are isomorphisms.
Then
$$
(K_{1}\cap K_{2})\sigma\leq Y_1\cap Y_2=\{(t,t\alpha)\mid t\in T_1\mbox{ such that }t\alpha=t\beta\}.
$$
Now the fact that $(K_1\cap K_2)\sigma$ is
subdirect in $T_1\times T_2$ implies that
$t\alpha=t\beta$ for all $t\in T_1$, and hence $\alpha=\beta$. However, this
implies that $Y_1=Y_2$, yielding $Y_1Y_2\neq T_1\times T_2$.
This contradiction proves the claim. \hfill$\Box$

\medskip

\noindent {\em Claim 2.}
  There exists a sequence of strips $X_1,\ldots,X_a$, with $a\geq 3$,
  involved in $\K$ such that
  \begin{enumerate}
  \item $X_1$ and $X_j$ are disjoint for $j\not\in\{a,2\}$;
  \item for $i=2,\ldots,a-1$ and $j\not\in \{i-1,i+1\}$, the strips
    $X_i$ and $X_j$ are disjoint;
  \item $X_a$ and $X_j$ are disjoint for $j\not\in\{a-1,1\}$;
  \item and finally,  
  \begin{multline*}
  |\supp{X_1}\cap \supp{X_2}|=
  |\supp{X_2}\cap \supp{X_3}|=\cdots\\=|\supp{X_{a-1}}\cap \supp{X_a}|=
  |\supp{X_a}\cap \supp{X_1}|=1.
  \end{multline*}
\end{enumerate}

\medskip

\noindent{\em Proof of Claim 2.}
By Claim~1, $\supp{X_1}\cap \supp{X_2}=\{T_t\}$ for some $t\leq k$.
Choose $g\in G_0$ such that $T_t^g\in \supp{X_2}\setminus
\supp{X_1}$; such an element $g$ exists since $G_0$ is transitive on
$\T=\{T_1,\ldots,T_k\}$ and $\supp{X_2}\setminus \supp{X_1}$ is non-empty.
Now $G_0$ acts by conjugation on the set of full strips involved in $\K$,
and so both $X_1^g$ and $X_2^g$ are  full strips involved in $\K$. As
$T_t^g$ is in both $\supp{X_1^g}$ and $\supp{X_2^g}$, but is not in
$\supp{X_1}$ we deduce that there exists a non-trivial  strip $X_3$ in $\K$
distinct from $X_1$, $X_2$ such that $\supp{X_3}\cap\left(\supp{X_2}\setminus
\supp{X_1}\right)\not=\emptyset$
(namely, we can take $X_3$ to be one of $X_1^g$ or $X_2^g$ as at least one
of these is distinct from $X_1$ and $X_2$).
Proceeding in this way we construct a sequence
$X_1,X_2,\ldots$ of distinct, non-trivial strips in $\K$ such
that $\supp{X_{d+1}}\cap\left(\supp{X_d}\setminus
\supp{X_{d-1}}\right)\not=\emptyset$
for each $d\geq 2$.
Since $k$ is finite, there exists $a$ such that 
\[\supp{X_a}\cap\bigl(\supp{X_1}\cup\cdots\cup
\supp{X_{a-2}}\bigr)\not=\emptyset. \]
Let $a$ be the least integer such that this property holds.
The conditions imposed on $X_1$ and $X_2$ imply that $a\geq 3$. 
By removing some initial segment of the sequence and relabelling
the $X_i$ if
necessary,
we may assume that the intersection $\supp{X_a}\cap \supp{X_1}$ is
non-empty,
while $\supp{X_a}\cap \supp{X_d}=\emptyset$ if $2\leq d \leq a-2$ for some $a\geq 3$. Now, applying
Claim~1 a number of times, the sequence $X_1,\ldots,X_a$ is as required.\hfill$\Box$

\medskip

Now assume that the conditions of Claim~2 are valid,
$X_1$ and $X_2$ are non-disjoint strips involved in $\K$ and
select $X_1,\ldots,X_a$ as in the proof of Claim~2.
By relabelling the $K_i$ we may assume that $X_1$ is involved in $K_1$.
Let $1=i_1<i_2<\cdots<i_d<a$ be such that among the $X_i$ 
the strips $X_{i_1},\ldots,X_{i_d}$ are precisely the ones
that are involved in $K_1$. 
Note that $X_a$ is not involved in $K_1$ since $\supp{X_a}$ and $\supp{X_1}$ are not
disjoint. Also, $i_{j+1}\geq i_j+2$ for all
$j=1,\ldots,d-1$
since $\supp{X_{i_j}}$ and $\supp{X_{i_j+1}}$ are not disjoint. We
may also relabel the $T_i$ so that
$$
\{T_1\}=\supp{X_a}\cap
\supp{X_1}\quad\mbox{and}\quad\{T_2\}=\supp{X_1}\cap \supp{X_2},
$$
and so that for $j=2,\ldots,d$,
\begin{eqnarray*}
\{T_{2j-1}\}&=&\supp{X_{i_{j}-1}}\cap \supp{X_{i_j}};\\
\{T_{2j}\}&=&\supp{X_{i_j}}\cap \supp{X_{i_{j}+1}}.
\end{eqnarray*}
It follows from Claim~2, that $T_1,\ldots,T_{2d}$ are
pairwise distinct.
Define  the projection map
\begin{equation}\label{sigmakeq}
  \sigma :M\to
T_1\times\cdots\times T_{2d}\quad\mbox{and}\quad
\widehat{K_1}=\bigcap_{i\neq 1}K_i.
\end{equation}

\medskip

\noindent{\em Claim 3.}
Using the notation introduced above, the following hold.
\begin{enumerate}
\item $K_1\sigma$ is a direct product  
 $Y_1\times\cdots\times Y_d$ such that each $Y_i$ is a non-trivial
full strip in $T_{2i-1}\times T_{2i}$ and $Y_i=\{(t,t\alpha_i)\mid t\in T_{2i-1}
\}$
for some isomorphism $\alpha_i:T_{2i-1}\rightarrow T_{2i}$. 
\item $\widehat K_1\leq Z_1\times\cdots\times Z_d$, such that,
for $i=1,\ldots,d-1$, $Z_i=\{(t,t\beta_i)\mid t\in T_{2i}\}$ is a non-trivial
full strip in $T_{2i}\times T_{2i+1}$ where $\beta_i:T_{2i}\rightarrow T_{2i+1}$
is an isomorphism.
Further
$Z_d=\{(t\beta_d,t)\mid t\in T_{2d}\}$ is a non-trivial full strip in 
$T_{2d}\times T_{1}$ where $\beta_d:T_{2d}\rightarrow T_1$ is an isomorphism.
\end{enumerate}

\medskip

\noindent{\em Proof of Claim 3.}
Assertion~(1) follows from the observation that
$$
K_1\sigma=(X_{i_1}\times\cdots\times X_{i_d})\sigma=Y_1\times\cdots\times Y_d
\quad
\mbox{with}\quad \mbox{$Y_j=X_{i_j}\sigma$ for each $j$}.
$$

Let us prove assertion~(2). It suffices to show that
$\widehat{K_1}\sigma_{\{2i,2i+1\}}\leq Z_i$ for $i=1,\ldots,d-1$ and 
$\widehat{K_1}\sigma_{\{1,2d\}}\leq Z_d$. We prove the claim for 
$i=1$, that is, for 
$\widehat{K_1}\sigma_{\{2,3\}}$, noting that
the proof for the other projections is identical.
Set $r=i_2-1$. Then the strips $X_2,\ldots,X_r$ are `between' $X_1$ and
$X_{i_2}$ in the sequence of the $X_i$ and they are  not involved in
$K_1$. 
Choose  $T_{m_1},\ldots,T_{m_r}$ such that $\{T_{m_i}\}=\supp X_i\cap 
\supp X_{i+1}$. By the choice made earlier, we have  $T_{m_1}=T_2$ and 
$T_{m_r}=T_3$. 
Let $\sigma'$ 
denote the projection onto $T_{m_1}\times\cdots\times T_{m_{r}}$. By 
Claim~2, 
the indices $m_1,\ldots,m_{r}$ are pairwise distinct.
Suppose that $x=(t_{m_1},\ldots,t_{m_{r}})$
is an element of $\widehat{K_1}\sigma'$ with $t_i\in T_{m_i}$.
The strip $X_2$ is involved in  $K_{m}$ for some  $m\neq 1$. Recall
that $X_2$ covers $T_{m_1}=T_2$ and $T_{m_2}$.
Then $x\in K_{m}\sigma'$ and hence $(t_{m_1},t_{m_2})\in X_2\sigma_{\{m_1,m_2\}}$.
Thus there exists an isomorphism $\gamma_{m_1}:T_{m_1}\rightarrow T_{m_2}$ such that
$X_2\sigma_{\{m_1,m_2\}}=\{(t,t\gamma_{m_1})\mid t\in T_{m_1}\}$. 
Using the same argument, we find that there exist
isomorphisms
$\gamma_{m_i}:T_{m_i}\rightarrow T_{m_{i+1}}$, for 
$i=1,\ldots,r-1$, such that
$X_{i+1}\sigma_{\{m_i,m_{i+1}\}}=\{(t,t\gamma_{m_2})\mid t\in T_{m_i}\}$.
Thus $t_{m_r}=t_{m_1}\gamma_{m_1}\cdots\gamma_{m_{r-1}}$.
Set $\beta_1=\gamma_{m_1}\cdots\gamma_{m_{r-1}}$. This argument
shows that
$$
\widehat{K_1}\sigma_{\{2,3\}}\leq\{(t,t\beta_1)\mid t\in T_2\}:=Z_1.
$$
Therefore  $\widehat{K_1}\sigma_{\{2,3\}}\leq Z_1$. The proof for the other projections
is identical.
This shows that assertion~(2) holds.\hfill$\Box$

\medskip

\noindent{\em Claim 4.}
Use the notation of Claim~3, and set $\alpha=\alpha_1\beta_1\alpha_2\cdots\alpha_d\beta_d$. Then the following hold.
\begin{enumerate}
\item $\alpha\in\aut T_1$ and $\alpha$ is uniform.
\item  $M_0$ is not a subdirect subgroup of $M$ where $M_0=\bigcap_i K_i$.
\end{enumerate}

\medskip

\noindent{\em Proof of Claim 4.}
  (1) It follows from Claim~3 that $\alpha\in\aut T_1$. 
Since $K_1\widehat{K_1}=M$, we have 
  $(K_1\sigma)(\widehat{K_1}\sigma)=T_1\times\cdots\times
T_{2d}$ with $\sigma$ and $\widehat K_1$ as in~\eqref{sigmakeq}. Therefore
\begin{equation}\label{yzfact}
  (Y_1\times\cdots\times Y_d)(Z_1\times\cdots\times Z_d)=
  T_1\times\cdots\times T_{2d}.
\end{equation}
Since the factorisation in~\eqref{yzfact} is as in
Lemma~\ref{doublestrips}, it follows from Lemma~\ref{doublestrips} that
$\alpha$ is uniform.

(2) By definition, $K_1\cap \widehat{K_1}=M_0$. Suppose that $M_0$ is subdirect in $M$. Then 
$M_0\sigma$ is also a subdirect subgroup of $T_1\times\cdots\times T_{2d}$. 
Suppose that $x=(t_1,t_2,\ldots,t_{2d})\in M_0\sigma$. Then 
$x\in K_1\sigma$ and $x\in\widehat{K_1}\sigma$, and so 
$t_{2i}=t_{2i-1}\alpha_i$, for $i=1,\ldots,d$, and also $t_{2i+1}=t_{2i}\beta_i$ for
$i=1,\ldots,d-1$, and $t_1=t_d\beta_d$. Thus $t_1=t_1\alpha_1\beta_1\cdots\alpha_d\beta_d=t_1\alpha$. Since
$M_0\sigma$ is subdirect, this has to hold for all $t_1\in T_1$, and hence
$\alpha=1$. However, by part~(1), $\alpha$ is uniform, which is a 
contradiction, as the identity automorphism is not uniform.
\hfill$\Box$

\medskip

Now Proposition~\ref{mainstripfact} follows at once from Claim~4.
\end{proof}

\section{Quasiprimitive groups of diagonal type}\label{section:sd}

Recall that a permutation group acting on $\Omega$ is quasiprimitive if
all the non-trivial normal subgroups of $G$ are transitive.
A quasiprimitive group $G$ is said to be of {\em diagonal
  type} if it has a unique minimal normal \FCR\ subgroup $M$ such that
$M=T_1\times\cdots\times T_k$ where the $T_i$ are non-abelian simple groups and
a point stabiliser $M_\alpha$ is a subdirect subgroup of $M$; that is,
denoting the coordinate projections by $\sigma_i$,
$M_\alpha\sigma_i=T_i$ for all $i$. By Scott's Lemma~\cite[Lemma, page~328]{sco:rep},
we have, in such a quasiprimitive group of diagonal type, that
$M_\alpha$ is a direct product of pairwise disjoint full strips\footnote{Scott's Lemma is most often applied to finite simple groups, but
  it holds for infinite simple groups and the proof presented in~\cite{sco:rep}
   does not assume finiteness.}.
The group $G$ is said to have {\em simple diagonal type} if
$M_\alpha$ is simple (that is, $M_\alpha$ is a full strip),
and otherwise $G$ is said to have {\em compound diagonal type}.

If $\Gamma$ is a set and $\ell\geq 2$,
then the wreath product $\sym\Gamma\wr\sy\ell$ can be considered
as a permutation group on $\Gamma^\ell$ in its product action, which
is defined as
$$
(\gamma_1,\ldots,\gamma_\ell)(g_1,\ldots,g_\ell)\sigma=
(\gamma_{1\sigma^{-1}}g_{1\sigma^{-1}},\ldots,\gamma_{\ell\sigma^{-1}}g_{\ell\sigma^{-1}})
$$
for all $\gamma_1,\ldots,\gamma_\ell\in\Gamma$,
$g_1,\ldots,g_\ell\in\sym\Gamma$, and $\sigma\in\sy\ell$. 

\begin{theorem}\label{diagonal}
Let $G$ be a quasiprimitive group  of diagonal
type acting on $\Omega$ with minimal normal subgroup
$M=T_1\times\cdots\times T_k$ where the $T_i$ are non-abelian simple groups.
Then $G$ can be embedded into a subgroup $W$ of $\Omega$ that is
permutationally isomorphic to $\sym\Gamma\wr\sy\ell$
in product action with $|\Gamma|\geq 2$ and
$\ell\geq 2$ if and only if $G$ is of compound diagonal type.
\end{theorem}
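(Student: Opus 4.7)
The plan is to establish the two directions of the equivalence separately. The forward implication follows immediately from Theorem~\ref{mainth}: if $G$ embeds as stated, then since $G$ is of diagonal type, it is either of simple or compound diagonal type, and Theorem~\ref{mainth} rules out the simple case, forcing $G$ to be of compound diagonal type.

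For the reverse direction, suppose $G$ is of compound diagonal type, so that by Scott's Lemma $M_\alpha = X_1 \times \cdots \times X_\ell$ is a direct product of pairwise disjoint full strips with $\ell \geq 2$, each of support size at least~$2$. The first step is to show that the partition $\mathcal P = \{\supp X_1, \ldots, \supp X_\ell\}$ of $\{T_1, \ldots, T_k\}$ is $G$-invariant. Since $M$ acts trivially on the index set under conjugation and $G = M G_\alpha$, it suffices to verify that $G_\alpha$ preserves $\mathcal P$; but $G_\alpha$ normalises $M_\alpha$ and Scott's Lemma guarantees uniqueness of the strip decomposition, so $G_\alpha$ permutes the $X_i$ and hence their supports. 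Setting $N_i = \prod_{T \in \supp X_i} T$, this yields a $G$-invariant decomposition $M = N_1 \times \cdots \times N_\ell$ in which each $X_i$ is a proper full diagonal subgroup of $N_i$.

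The second step converts this block structure on $\{T_i\}$ into a cartesian decomposition of $\Omega$. Since $M_\alpha$ is the internal direct product of the $X_i \leq N_i$, there is an $M$-equivariant bijection $\Omega \cong M/M_\alpha \cong \Gamma_1 \times \cdots \times \Gamma_\ell$, where $\Gamma_i = N_i/X_i$. Because $G$ permutes the $N_i$ transitively, all $|\Gamma_i|$ coincide with some $|\Gamma| \geq 2$ (the inequality holding because $X_i$ is proper in $N_i$), and the $G$-action respects this product structure, permuting factors according to the induced action of $G_\alpha$ on $\mathcal P$. Consequently $G$ embeds into $\sym\Gamma \wr \sy\ell$ acting on $\Gamma^\ell$ in product action, as required.

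The main obstacle is the $G$-invariance of $\mathcal P$: a priori $\mathcal P$ is only canonically attached to $M_\alpha$, and one must show that conjugation by elements of $G$ (which may permute the $T_i$ non-trivially) respects it as an unordered partition. This rests on the uniqueness clause in Scott's Lemma for subdirect subgroups, together with the observation that $G_\alpha \leq N_G(M_\alpha)$. Once this invariance is in place, the remainder of the construction is essentially bookkeeping with coset spaces of characteristically simple groups.
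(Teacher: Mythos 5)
Your forward direction is circular. You invoke Theorem~\ref{mainth} to conclude that the simple diagonal case cannot embed, but in the paper Theorem~\ref{mainth} is \emph{deduced} from Theorem~\ref{diagonal} (the ``only if'' direction of the latter is exactly the former, restricted to the simple diagonal case, and the paper explicitly closes Section~\ref{section:sd} with ``Theorem~\ref{mainth} is an easy consequence of Theorem~\ref{diagonal}''). There is no independent proof of Theorem~\ref{mainth} available to you here. The entire content of the forward direction is the chain Theorem~\ref{nostripfact} $\Rightarrow$ Proposition~\ref{mainstripfact} $\Rightarrow$ Theorem~\ref{diagonal}, and you have bypassed it. The real argument must: (i) show $M$ lies in the base group $(\sym\Gamma)^\ell$ of $W$ (the paper does this via the $p^\ell \nmid \ell!$ obstruction to $M\cap\ker\pi=1$); (ii) define $K_i$ as the stabiliser of a fixed $\gamma\in\Gamma$ under the $i$-th coordinate projection $\pi_i$, show $\{K_1,\ldots,K_\ell\}$ is a $G_\omega$-invariant abstract cartesian factorisation of $M$ with $\bigcap_i K_i = M_\omega$; (iii) observe that since $M_\omega$ is subdirect, each $K_i$ is subdirect and hence each proper $K_i$ involves a non-trivial full strip; and then (iv) apply Proposition~\ref{mainstripfact} to deduce that strips in distinct $K_i$ are pairwise disjoint, so $M_\omega$ is a direct product of at least two non-trivial full strips. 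None of this appears in your proposal.

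Your reverse direction is in the right spirit and is essentially the paper's argument with more detail. The paper states more tersely that $M$ admits a $G$-invariant decomposition $M = M_1\times\cdots\times M_r$ compatible with $M_\omega = \prod_i (M_i\cap M_\omega)$, then identifies $\Omega$ with $\Delta^r$ where $\Delta = [M_1:M_1\cap M_\omega]$. Your expansion (invoking uniqueness of the strip decomposition of a subdirect subgroup and the fact that $G_\omega\leq \norm{G}{M_\omega}$ to obtain $G$-invariance of the partition $\mathcal P$ of $\{T_1,\ldots,T_k\}$) is a legitimate and welcome clarification of a step the paper leaves implicit, and the cardinality argument for a common $|\Gamma|$ is correct. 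But the proposal cannot stand without an honest proof of the forward direction.
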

\begin{proof} We suppose that $G$ is quasiprimitive of diagonal type with a minimal normal subgroup $M$ as in the statement.
  Suppose that there exists a subgroup $W$ of $\sym\Omega$
  that is permutationally isomorphic to $\sym\Gamma\wr\sy\ell$
  with $|\Gamma|\geq 2$ and $\ell\geq 2$ such that $G\leq W$. Then
  $\Omega$ can be identified with $\Gamma^\ell$, and so from now
  on we assume that $\Omega=\Gamma^\ell$ and that $W=\sym\Gamma\wr\sy\ell$.
  Note that $M$ is
  transitive on $\Omega$, since $G$ is quasiprimitive.

  Let $\pi:W\rightarrow\sy\ell$
  denote the natural projection. Since $M$ is a minimal normal
  subgroup of $G$,  either $M\leq \ker\pi$ or $M\cap\ker\pi=1$.
  In the latter case, the restriction of $\pi$ to $M$ is a faithful
  permutation representation of $M$ and hence $M$ is isomorphic to a
  subgroup of $\sy\ell$. This implies that $M$ is finite and, since
  $M$ is transitive on $\Omega$, the sets $\Omega$ and
  $\Gamma$ are finite. Now if $p$ is a prime dividing $|\Gamma|$, then
  $p^\ell\mid |\Omega|$, and hence, as $M$ is transitive on $\Omega$,
  $p^\ell\mid|M|$, which gives
  $p^\ell\mid\ell!$. Since this is
  impossible (see~\cite[Exercise~2.20]{jones}),  we must have
   $M\leq\ker\pi$.
  Therefore
  $M$ is contained in the base group $B=(\sym\Gamma)^\ell$ of $W$.
  
  The action of the base group can be viewed via $\ell$ projection
  maps
  $\pi_1,\ldots,\pi_\ell:B\rightarrow\sym\Gamma$ given by
  $(g_1,\ldots,g_\ell)\pi_i=g_i$ for 
  $(g_1,\ldots,g_\ell)\in(\sym\Gamma)^\ell$. In this way we may write,
  for $g\in B$ and $(\gamma_1,\ldots,\gamma_\ell)\in\Omega$, that
  $$
  (\gamma_1,\ldots,\gamma_\ell)g=
  (\gamma_1(g\pi_1),\ldots,\gamma_\ell (g\pi_\ell)).
  $$
  In particular, the last equation is valid if $g\in M$.
  We consider the homomorphisms $\pi_1,\ldots,\pi_\ell$ as permutation
  representations of $B$.
  Choose $\gamma\in\Gamma$, set
  $\omega=(\gamma,\ldots,\gamma)\in\Omega$, and, for
  $i=1,\ldots,\ell$, let $K_i$ denote the stabiliser in $M$ 
  of $\gamma$ under $\pi_i$ .
  Since $M$ is transitive on $\Omega$, each $M\pi_i$ is transitive on $\Gamma$,
  and, since $|\Gamma|\geq 2$, each $K_i$ is a proper subgroup of $M$. 

  We claim that the set $\K=\{K_1,\ldots,K_\ell\}$
  of proper subgroups of $M$  is an abstract cartesian
  factorisation of $M$. Let us prove that
  $M=K_1\left(\bigcap_{i\neq 1} K_i\right)$;
  the other factorisations in equation~\eqref{cf} can be shown
  similarly. Setting $\bar{K_1}=\bigcap_{i\neq 1} K_i$ and
  noting that $K_1$ is the point stabiliser in $M$ under its transitive
  action on $\Gamma$ by $\pi_1$, the
  factorisation $M=K_1\bar{K_1}$ is equivalent to the assertion
  that $\bar{K_1}$ is transitive on $\Gamma$ under the representation
  $\pi_1$. Suppose that $\gamma'\in\Gamma$ and consider the point
  $\omega'=(\gamma',\gamma,\ldots,\gamma)$. Since $M$ is transitive on
  $\Omega$, there exists some $m\in M$ such that $\omega m=\omega'$.
  Considering the definitions of $\omega$ and $\omega'$, this implies
  that $\gamma(m\pi_1)=\gamma'$ and $\gamma (m\pi_i)=\gamma$ for all
  $i=2,\ldots,\ell$; that is, $m\in\bar{K_1}$. Since $\gamma'$ is
  chosen arbitrarily, $\bar{K_1}$ is transitive on $\Gamma$ under
  $\pi_1$, and so $K_1\bar{K_1}=M$, as claimed. As noted above,
  the other factorisations of~\eqref{cf} follow similarly, and
  $\K$ is an abstract cartesian factorisation for $M$, as claimed.

  Since $\bigcap_{i\geq 1}K_i$ is the intersections of the stabilisers of
  $\gamma$ under the representations $\pi_1,\ldots,\pi_\ell$ and
  $\omega=(\gamma,\ldots,\gamma)$, it follows that
  $$
  \bigcap_{i=1}^\ell K_i=M_\omega.
  $$
  Further, $G_\omega$ permutes by conjugation the $\ell$ direct
  factors of the base group $B$, and so the set $\K$ is invariant under conjugation by $G_\omega$. 

  Since $M$ is a minimal normal subgroup of $G$, $G$ is transitive by conjugation on the $T_i$, and so the $T_i$  are pairwise
  isomorphic. Let
$T$ denote the common isomorphism type of the $T_i$. Since $G$ has diagonal type,
 $M_\omega$ is a subdirect subgroup of $M$.
For each $K\in\K$ we have $M_\omega\leq K$, and
so 
all elements of $\K$ are subdirect subgroups of $M$.
Let
$K_1,\ K_2\in \K$ be  distinct subgroups. Then $K_1,\
K_2\neq M$, and so,
$K_1,\ K_2$ involve non-trivial full strips $X_1$ and
$X_2$, say, and, by the factorisation in equation~\eqref{cf}, $M=K_1K_2$, which
implies 
$X_1\neq X_2$.
By Proposition~\ref{mainstripfact}, $X_1$ and $X_2$ are disjoint strips.
This means, in particular, that if $X$ is a non-trivial full strip involved
in $K_j$ covering $T_i$, then $T_i\leq K_{m}$ for all $m\neq j$. 
Therefore if $X_1,\ldots,X_s$ are the non-trivial full strips involved
in $\K$, then $M_\omega=X_1\times\cdots\times X_s$. Further, since
each $K\in\K$ involve at least one non-trivial full strip and $|\K|\geq 2$,
the argument
above shows that $s\geq 2$.
Hence $M_\omega$ is a direct product of at least
two non-trivial full strips, and so $G$ has compound diagonal type.

Conversely, suppose that
$G$ has compound diagonal type, and so $M_\omega$ is a direct product
$M_\omega=X_1\times\cdots\times X_r$
of non-trivial full strips $X_i$.
Then $M$ admits a non-trivial direct product
decomposition $M=M_1\times \cdots\times M_r$ such that
$$
M_\omega=(M_1\cap M_\omega)\times\cdots\times (M_r\cap M_\omega).
$$
Setting $\Delta$ to be the right coset space
$[M_1:M_1\cap M_\omega]$, the transitivity of $M$ on $\Omega$
allows us to identify $\Omega$ with the direct power $\Delta^r$ and,
under this identification, $G$ becomes a subgroup of $\sym\Delta\wr\sy r$ in
product action.
\end{proof}

Theorem~\ref{mainth} is an easy consequence of  Theorem~\ref{diagonal}.

\def\cprime{$'$}

\end{document}